%
%
%
%
\documentclass{amsart}

\usepackage{graphicx,amssymb,mathrsfs,amsmath,color,fancyhdr}
\usepackage[all]{xy}
\newtheorem{theorem}{Theorem}[section]
\newtheorem{lemma}[theorem]{Lemma}

\theoremstyle{definition}

\newtheorem{corollary}[theorem]{Corollary}

\theoremstyle{remark}

\numberwithin{equation}{section}



\begin{document}
 
\title[Uniform lower bound for the lcm of a polynomial sequence]
{Uniform lower bound for the least common multiple of a polynomial sequence}


\begin{abstract}
Let $n$ be a positive integer and $f(x)$ be a polynomial with nonnegative
integer coefficients. We prove that ${\rm lcm}_{\lceil n/2\rceil \le i\le n}
\{f(i)\}\ge 2^n$ except that $f(x)=x$ and $n=1, 2, 3, 4, 6$ and that $f(x)=x^s$
with $s\ge 2$ being an integer and $n=1$, where $\lceil n/2\rceil$ denotes
the smallest integer which is not less than $n/2$. This improves and extends
the lower bounds obtained by Nair in 1982, Farhi in 2007 and Oon in 2013.
\end{abstract}
\author{Shaofang Hong}
\address{Mathematical College, Sichuan University, Chengdu 610064, P.R. China
and Yangtze Center of Mathematics, Sichuan University, Chengdu 610064, P.R. China}
\email{sfhong@scu.edu.cn; s-f.hong@tom.com; hongsf02@yahoo.com}
\author{Yuanyuan Luo}
\address{Mathematical College, Sichuan University, Chengdu 610064, P.R. China}
\email{yuanyuanluoluo@163.com}
\author{Guoyou Qian}
\address{Center for Combinatorics, Nankai University, Tianjin 300071, P.R. China}
\email{qiangy1230@163.com; qiangy1230@gmail.com}
\author{Chunlin Wang}
\address{Mathematical College, Sichuan University, Chengdu 610064, P.R. China}
\email{wdychl@126.com}
\thanks{The work was supported partially by National Science Foundation of
China Grant \#11371260, by the Ph.D. Programs Foundation of Ministry of Education
of China Grant \#20100181110073 and by Postdoctoral Science Foundation of China
Grant \#2013M530109}

\keywords{least common multiple, polynomial sequence, lower bound, algebraic integer}
\maketitle

\section{Introduction}
The least common multiple of consecutive positive
integers was first studied by Chebyshev, who made an important progress
for the proof of prime number theorem. Actually, Chebyshev \cite{[Ch]} introduced the function
$\psi(x):=\sum_{p^k\le x}\log p=\log {\rm lcm}_{1\le i\le x}\{i\}$, where $x>0$ is a real number.
From Chebyshev's work, one can derive that prime number theorem is equivalent to the statement:
$\psi(n)=\log {\rm lcm}(1, ..., n)\sim n$ as $n$ tends to infinity. Since then, the least
common multiple of sequences of integers became popular. Bateman, Kalb and Stenger \cite{[BKS]}
gave an asymptotic formula of $\log{\rm lcm}_{1\le i\le n}\{b+ai\}$ as $n$ tends to infinity,
where $a\ge 1$ and $b\ge 0$ are  coprime integers. Hong, Qian and Tan \cite{[HQT]} got an
asymptotic formula of the least common multiple of a sequence of products of linear polynomials.
Qian and Hong \cite{[QH]} investigated the asymptotic behavior of the least common multiple of
any consecutive arithmetic progression terms. Further, Farhi and Kane \cite{[FK]}
and Hong and Qian \cite{[HQ]} obtained some results on the least common multiple
of consecutive arithmetic progression terms.

Effective bounds for the least common multiple of integer sequences are given by several authors.
Hanson \cite{[Ha]} proved that ${\rm lcm}_{1\le i\le n}\{i\}<3^n$ for any integer $n\ge 1$.
Nair \cite{[N]} showed that ${\rm lcm}_{1\le i\le n}\{i\}\ge 2^n$ for any integer $n\ge 7$.
Lower bounds of the least common multiple of finite arithmetic progression
are investigated by Farhi \cite{[F1]} \cite{[F]}, Hong and Feng \cite{[HF]}, Hong and
Kominers \cite{[HK]} and Wu et al. \cite{[WTH]}. For the quadratic case,
some results are also achieved. Farhi \cite{[F]} provided a nontrivial lower bound
for ${\rm lcm}_{1\le i\le n}\{i^2+1\}$. Oon \cite{[O]} improved Farhi's lower
bound by proving that ${\rm lcm}_{1\le i\le n}\{i^2+c\}\ge 2^n$ with
$c$ being a positive integer.

In this paper, we find surprisingly that $2^n$ is the uniform lower bound for
the least common multiple of polynomial sequences of nonnegative
integer coefficients. That is, we have the following result.

\begin{theorem}\label{main}
Let $n\ge 1$ be an integer and $f(x)$ be a polynomial
of nonnegative integer coefficients. Then
${\rm lcm}_{\lceil n/2\rceil\le i\le n}\{f(i)\}\ge 2^n$
except that $f(x)=x$ and $n=1, 2, 3, 4, 6$ and that $f(x)=x^s$
with $s\ge 2$ being an integer and $n=1$, where $\lceil n/2\rceil$
stands for the smallest integer which is not less than $n/2$.
\end{theorem}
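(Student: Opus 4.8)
The plan is to attach to the sequence $\{f(i)\}$ a single rational number whose denominator is controlled by $L:={\rm lcm}_{m\le i\le n}\{f(i)\}$, where $m:=\lceil n/2\rceil$ and $r:=n-m=\lfloor n/2\rfloor$. Concretely, I would take the $r$-th divided difference of $1/f$ on the consecutive nodes $m,m+1,\dots,n$, namely
\[
S:=\sum_{i=m}^{n}\frac{(-1)^{n-i}}{(i-m)!\,(n-i)!}\cdot\frac{1}{f(i)}.
\]
Multiplying by $r!\,L$ gives $r!\,L\,S=\sum_{i=m}^{n}(-1)^{n-i}\binom{r}{i-m}\frac{L}{f(i)}$, and since $f(i)\mid L$ for every $i$ in the range, every summand is an integer. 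Hence $r!\,L\,S\in\mathbb{Z}$, so once one knows $S\neq0$ one obtains the clean bound $L\ge \frac{1}{r!\,|S|}$. The whole problem is thereby reduced to the twin tasks of showing $S\neq0$ and of bounding $|S|$ from above.

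The model computation is $f(x)=x$. Using the evaluation $\sum_{k=0}^{r}(-1)^{k}\binom{r}{k}\frac{1}{m+k}=\frac{r!\,(m-1)!}{n!}$ (here $m+r=n$), one finds $|S|=\frac{(m-1)!}{n!}$ exactly, so the bound becomes $L\ge \frac{n!}{(m-1)!\,(n-m)!}$. A short Stirling/monotonicity check shows that $\frac{n!}{(m-1)!\,(n-m)!}\ge 2^{n}$ precisely for $n\ge7$, which already explains why small $n$ must be treated apart. The monomial case is then free: since ${\rm lcm}(a^{s},b^{s})={\rm lcm}(a,b)^{s}$ prime by prime, one has ${\rm lcm}_{m\le i\le n}\{i^{s}\}=\big({\rm lcm}_{m\le i\le n}\{i\}\big)^{s}\ge {\rm lcm}_{m\le i\le n}\{i\}$, so $f(x)=x^{s}$ inherits the bound from $f(x)=x$ and contributes no new exception except at $n=1$, where the single value $f(1)=1<2$ occurs.

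For a general non-constant $f$ with nonnegative integer coefficients I would aim to prove the sharp inequality $|S|\le \frac{(m-1)!}{n!}$, the extremal case being $f(x)=x$; combined with the first two paragraphs this yields $L\ge \frac{n!}{(m-1)!(n-m)!}\ge2^{n}$ for all $n\ge7$. The tool is the partial-fraction expansion of $1/f$: writing $f(x)=a_d\prod_{k}(x-\alpha_{k})$ and using $\Delta^{r}\frac{1}{x-\alpha}\big|_{x=m}=\frac{(-1)^{r}r!}{\prod_{i=m}^{n}(i-\alpha)}$ one gets $S=(-1)^{r}\sum_{k}\frac{1}{f'(\alpha_{k})\prod_{i=m}^{n}(i-\alpha_{k})}$ (with the obvious modification for repeated roots). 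For a single root with non-positive real part this already gives $\prod_{i=m}^{n}|i-\alpha_{k}|\ge\prod_{i=m}^{n}i=\frac{n!}{(m-1)!}$, hence the desired estimate. The main obstacle is that nonnegativity of the coefficients does \emph{not} force all roots into the closed left half-plane — cyclotomic factors such as $x^{4}+x^{3}+x^{2}+x+1$ have roots with positive real part — so the factors $\prod_{i=m}^{n}|i-\alpha_{k}|$ coming from such roots can dip below $\frac{n!}{(m-1)!}$. Controlling them is where I expect the real work to lie: I would combine the global identity $\prod_{k}\prod_{i=m}^{n}(i-\alpha_{k})=a_d^{-(r+1)}\prod_{i=m}^{n}f(i)$, in which $\prod_{i=m}^{n}f(i)$ and the symmetric quantity $\prod_{k}f'(\alpha_{k})$ (a discriminant/resultant, an honest integer up to powers of $a_d$) are integers, with the location constraints that nonnegativity imposes on the $\alpha_{k}$, so as to estimate the whole sum $S$ rather than each term separately. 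The same root analysis should simultaneously exclude the degenerate case $S=0$, which is immediate when all roots are real: then every root is $\le0$, $1/f$ is completely monotone on $(0,\infty)$, and $S$ carries the definite sign $(-1)^{r}$.

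It remains to dispose of the finitely many small indices $n\le6$, where the clean bound $\frac{n!}{(m-1)!(n-m)!}$ drops below $2^{n}$ (it equals $30<32$ at $n=5$ and $60<64$ at $n=6$) even though the assertion still holds. For these few values the range $m\le i\le n$ contains at most four integers, so I would analyze all admissible $f$ directly from $L\ge 1/(r!|S|)$ together with the explicit small values of $f(i)$; this pins down exactly the listed exceptions, namely $f(x)=x$ for $n\in\{1,2,3,4,6\}$ and $f(x)=x^{s}$ with $s\ge2$ for $n=1$, the latter being the statement $f(1)=1<2$. Assembling the three regimes — the monomial reduction, the divided-difference bound for $n\ge7$, and the direct analysis for $n\le6$ — completes the proof.
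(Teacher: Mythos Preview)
Your divided-difference setup is sound, and for $f(x)=x$ it delivers exactly the bound $L\ge n!/\bigl((m-1)!\,r!\bigr)=m\binom{n}{m}$ that one wants. The gap is precisely where you flag it. For a general $f$ you need both $S\ne 0$ and $|S|\le (m-1)!/n!$, and you prove neither: roots with positive real part genuinely obstruct any term-by-term estimate of the partial-fraction expansion of $S$, and the passage ``combine the global identity \dots\ with the location constraints \dots\ so as to estimate the whole sum'' is a hope, not an argument. The non-vanishing of $S$ is likewise left open outside the real-rooted case. Since your treatment of $n\le 6$ also leans on $L\ge 1/(r!|S|)$, the small cases are not settled either.

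The paper closes this gap with a single move that stays inside your framework, and it is worth seeing because it shows what to replace. Rather than take the $r$-th difference of $1/f$, take it of each linear factor $1/(x-\alpha_i)$ separately. Your own formula gives
\[
\mathcal{A}_i:=\frac{(n-m)!\,L}{\prod_{k=m}^{n}(k-\alpha_i)}
=\sum_{k=m}^{n}(-1)^{k-m}\binom{n-m}{k-m}\,\frac{L}{k-\alpha_i}.
\]
Each summand is an algebraic integer, because $L/f(k)\in\mathbb{Z}$ while $f(k)/(k-\alpha_i)=a_s\prod_{j\ne i}(k-\alpha_j)$ is an algebraic integer; hence so is $\mathcal{A}_i$. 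Now multiply over all $s$ roots:
\[
\prod_{i=1}^{s}\mathcal{A}_i=\bigl((n-m)!\,L\bigr)^{s}\prod_{k=m}^{n}\frac{a_s}{f(k)}
\]
is a nonzero \emph{rational} algebraic integer, hence a rational integer of absolute value at least $1$. This yields
\[
L\ \ge\ \frac{1}{(n-m)!}\prod_{k=m}^{n}\Bigl|\frac{f(k)}{a_s}\Bigr|^{1/s}
\ \ge\ \frac{1}{(n-m)!}\prod_{k=m}^{n}k\ =\ m\binom{n}{m},
\]
the second inequality using only $f(k)\ge a_s k^{s}$, which is where nonnegativity of the coefficients actually enters. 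You thus reach exactly the bound you were aiming for when $n\ge 7$, but without ever estimating $|S|$ or locating roots: rationality of the product over all $\alpha_i$ replaces the termwise control you could not obtain. The cases $n\le 6$ are then handled by short direct estimates (splitting on $\deg f$), not via $|S|$.
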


In particular, we have the following interesting result.

\begin{corollary}\label{cor1}
{\it Let $n\ge 1$ be an integer and $f(x)$ be a
polynomial of nonnegative integer coefficients.
Then ${\rm lcm}_{1\le i\le n}\{f(i)\}\ge 2^n$
except that $f(x)=x$ and $n=1, 2, 3, 4, 6$ and
that $f(x)=x^s$ with $s\ge 2$ being an integer and $n=1$.}
\end{corollary}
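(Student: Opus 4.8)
The plan is to obtain Corollary \ref{cor1} as an immediate consequence of Theorem \ref{main}, using only the monotonicity of the least common multiple under enlargement of the index set, together with a finite check of the exceptional pairs. The starting observation is that for every integer $n\ge 1$ the index set $\{i:\lceil n/2\rceil\le i\le n\}$ is a subset of $\{i:1\le i\le n\}$. Since $f$ has nonnegative integer coefficients, each value $f(i)$ with $i\ge 1$ is a positive integer, so ${\rm lcm}_{\lceil n/2\rceil\le i\le n}\{f(i)\}$ divides ${\rm lcm}_{1\le i\le n}\{f(i)\}$; being positive integers, this forces
$$
{\rm lcm}_{1\le i\le n}\{f(i)\}\ge {\rm lcm}_{\lceil n/2\rceil\le i\le n}\{f(i)\}.
$$

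With this inequality in hand, the generic case is automatic. Whenever the pair $(f,n)$ is not among the pairs excluded in Theorem \ref{main} --- that is, $f(x)=x$ with $n\in\{1,2,3,4,6\}$, or $f(x)=x^s$ with $s\ge 2$ and $n=1$ --- Theorem \ref{main} supplies ${\rm lcm}_{\lceil n/2\rceil\le i\le n}\{f(i)\}\ge 2^n$, and the displayed inequality then yields ${\rm lcm}_{1\le i\le n}\{f(i)\}\ge 2^n$. This disposes of all non-exceptional cases at once, and shows in particular that no pair outside the stated list can fail the bound.

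It remains only to confirm that the listed pairs are genuine exceptions for the full range as well, so that the exception set of the corollary coincides with that of the theorem. This is a finite computation: for $f(x)=x$ one has ${\rm lcm}\{1\}=1<2$, ${\rm lcm}\{1,2\}=2<4$, ${\rm lcm}\{1,2,3\}=6<8$, ${\rm lcm}\{1,2,3,4\}=12<16$, and ${\rm lcm}\{1,\dots,6\}=60<64$, while for $f(x)=x^s$ with $s\ge 2$ and $n=1$ one has ${\rm lcm}\{1^s\}=1<2$. For contrast, $n=5$ is not an exception, since ${\rm lcm}\{1,\dots,5\}=60>32$. Since the corollary is a direct specialization, there is no genuine obstacle; the only point requiring care is the bookkeeping that enlarging the index set from the half range to the full range neither rescues any of the theorem's exceptional pairs into satisfying the bound (it does not, by the explicit values above) nor creates new exceptions (it cannot, by the monotonicity inequality).
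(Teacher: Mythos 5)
Your proof is correct and matches the paper's (implicit) reasoning: the paper presents the corollary as an immediate consequence of Theorem \ref{main}, which is exactly your monotonicity argument ${\rm lcm}_{1\le i\le n}\{f(i)\}\ge {\rm lcm}_{\lceil n/2\rceil\le i\le n}\{f(i)\}$. Your additional verification that the listed pairs remain genuine exceptions over the full range (e.g.\ ${\rm lcm}(1,\dots,6)=60<64$) is a careful touch the paper leaves unstated, but it is the same route.
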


Evidently, if we take $f(x)=x$, then Corollary 1.2 becomes
Nair's lower bound \cite{[N]}. If one picks $f(x)=x^2+c$, then Theorem
1.1 reduces to Oon's result \cite{[O]}.

The paper is organized as follows. In Section 2, we present some basic
facts which are needed in the proof of our main result. Consequently,
in Section 3, we prove three results about the least common multiple,
and then show Theorem 1.1 as the conclusion of this paper.

\section{Preliminaries}
In this section, we show three lemmas which can be proved
with a little effort and are needed in
the proof of Theorem \ref{main}. Recall that a
complex number is called an {\it algebraic integer} if it is
a root of monic polynomial of integer coefficients
(see, for example, \cite{[AW]}).

\begin{lemma}\label{lm1}
Let $s\ge 1$ be an integer and $f(x)=\sum_{i=0}^s a_ix^i\in
\mathbb{Z}[x]$ be a polynomial of degree $s$.
If $\alpha_{1}, ..., \alpha_{s}$ are $s$ roots of $f(x)$,
then $a_s\big(\displaystyle\prod_{j\in \{1,...,s\}
\setminus\{i\}}\alpha_j\big)$ is an algebraic integer
for each integer $i$ with $1\le i\le s$.
\end{lemma}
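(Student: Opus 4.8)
The plan is to prove this by relating the quantity $a_s\prod_{j\neq i}\alpha_j$ to elementary symmetric functions of the roots, which are governed by the coefficients via Vieta's formulas. The key observation is that for a fixed index $i$, the product $\prod_{j\neq i}\alpha_j$ is almost the full product of roots $\prod_{j=1}^{s}\alpha_j$, missing only the factor $\alpha_i$. Since $a_s\prod_{j=1}^{s}\alpha_j = (-1)^s a_0$ is already an integer, the real content lies in showing that multiplying by the single root $\alpha_i$ does not destroy algebraic integrality after we scale by $a_s$.

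First I would write $g(x)=x^s f(1/x)=\sum_{i=0}^{s}a_{s-i}x^i$, the reversed polynomial, whose roots are $\alpha_1^{-1},\dots,\alpha_s^{-1}$ (assuming, as one may reduce to, that $a_0\neq 0$ so no root vanishes). Then $a_s\prod_{j\neq i}\alpha_j = a_s\bigl(\prod_{j=1}^{s}\alpha_j\bigr)\alpha_i^{-1} = (-1)^s a_0\,\alpha_i^{-1}$. This reframes the claim: I must show that $a_0\,\alpha_i^{-1}$ is an algebraic integer, equivalently that $a_0$ times a root of $g$ is an algebraic integer. But this is precisely the standard fact that if $\beta$ is a root of a polynomial $c_s x^s + \cdots + c_0\in\mathbb{Z}[x]$ with leading coefficient $c_s$, then $c_s\beta$ is an algebraic integer; here $\beta=\alpha_i^{-1}$ is a root of $g$, whose leading coefficient is $g$'s top term $a_s$, so $a_s\alpha_i^{-1}$ is an algebraic integer, and then $a_0\alpha_i^{-1}$ is an integer multiple of it only after care with the constant term. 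The cleanest route avoids the reversal altogether: apply the leading-coefficient fact directly to $f$, noting $\alpha_i$ is a root of $f$ with leading coefficient $a_s$, so $a_s\alpha_i$ is an algebraic integer, and the full product $\prod_{j\neq i}\alpha_j$ then equals $(a_s\prod_j\alpha_j)/(a_s\alpha_i)$ up to the unit $a_s$; I would instead argue multiplicatively that $a_s\prod_{j\neq i}\alpha_j$ is a symmetric-up-to-$i$ expression.

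The most transparent argument I would actually carry out is the factorization identity. Write $f(x)=a_s\prod_{j=1}^{s}(x-\alpha_j)$, and divide out the factor $(x-\alpha_i)$ to form the quotient $h_i(x)=f(x)/(x-\alpha_i)=a_s\prod_{j\neq i}(x-\alpha_j)$. Because $\alpha_i$ is a root of the monic-after-scaling polynomial $a_s^{s-1}f(x)$, polynomial division of $f(x)$ by $(x-\alpha_i)$ inside $\mathbb{Z}[a_s\alpha_i][x]$ shows that every coefficient of $h_i(x)$ lies in $\mathbb{Z}[a_s\alpha_i]$, hence is an algebraic integer. The leading coefficient of $h_i$ is $a_s$ and its constant term is $a_s\prod_{j\neq i}(-\alpha_j)=(-1)^{s-1}a_s\prod_{j\neq i}\alpha_j$, which is therefore an algebraic integer; stripping the sign gives the claim.

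I expect the main obstacle to be the edge cases and the bookkeeping of which quantity is manifestly an algebraic integer. One must handle $a_0=0$ (a vanishing root, for which $a_s\prod_{j\neq i}\alpha_j$ is trivially $0$ or reduces to a lower-degree problem) and must be careful that the ring $\mathbb{Z}[a_s\alpha_i]$ genuinely consists of algebraic integers, which requires knowing $a_s\alpha_i$ is an algebraic integer in the first place — the standard lemma that a root of $a_sx^s+\cdots+a_0$ scaled by $a_s$ satisfies a monic integer polynomial. Once that foundational fact is in hand, the division argument is routine, so the real work is packaging these classical facts rather than any deep computation.
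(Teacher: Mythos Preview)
Your first approach via the reversed polynomial is essentially the paper's argument and works, but a slip derailed it: the leading coefficient of $g(x)=x^{s}f(1/x)=a_{0}x^{s}+a_{1}x^{s-1}+\cdots+a_{s}$ is $a_{0}$, not $a_{s}$. With that correction, the standard lemma (leading coefficient times a root is an algebraic integer) applied to $g$ and its root $\alpha_{i}^{-1}$ gives at once that $a_{0}\alpha_{i}^{-1}$ is an algebraic integer, which up to the sign $(-1)^{s}$ is exactly $a_{s}\prod_{j\ne i}\alpha_{j}$. The paper does precisely this: after disposing of $s=1$ and the cases where some $\alpha_{j}=0$, it writes $\beta_{i}=(-1)^{s}a_{0}/\alpha_{i}$ and exhibits the monic integer polynomial $x^{s}+a_{1}x^{s-1}+a_{2}a_{0}x^{s-2}+\cdots+a_{s}a_{0}^{s-1}$, obtained from $\dfrac{a_{0}^{s-1}}{\alpha_{i}^{s}}f(\alpha_{i})=0$, of which $a_{0}/\alpha_{i}$ is a root.

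By contrast, the ``most transparent argument'' you say you would actually carry out has a genuine gap. The divisor $x-\alpha_{i}$ does not lie in $\mathbb{Z}[a_{s}\alpha_{i}][x]$, because $\alpha_{i}$ itself need not belong to $\mathbb{Z}[a_{s}\alpha_{i}]$ when $|a_{s}|>1$; hence the division cannot be performed in that ring. Concretely, synthetic division gives the coefficients of $h_{i}$ as $b_{k}=\sum_{j=k+1}^{s}a_{j}\alpha_{i}^{\,j-k-1}$, and already $b_{s-3}=a_{s-2}+a_{s-1}\alpha_{i}+a_{s}\alpha_{i}^{2}$ contains the term $a_{s-1}\alpha_{i}$, not visibly an algebraic integer. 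Clearing denominators one only gets $a_{s}^{\,s-2}b_{0}\in\mathbb{Z}[a_{s}\alpha_{i}]$, i.e.\ that $a_{s}^{\,s-2}\cdot a_{s}\prod_{j\ne i}\alpha_{j}$ is an algebraic integer, which is too weak. So abandon the division route and keep your reversal approach with the corrected leading coefficient; it coincides with the paper's proof.
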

\begin{proof}
Clearly, Lemma \ref{lm1} is true if $s=1$. We let $s\ge 2$ in what follows.
Write $\beta_i:=a_s\big(\displaystyle\prod_{j\in \{1,...,s\}
\setminus\{i\}}\alpha_j\big)$ for each integer $i$ with $1\le i\le s$.
If at least two of $\alpha_{1}, ..., \alpha_{s}$ are zero, then
$\beta_i=0$ for each integer $i$ with $1\le i\le s$. So Lemma \ref{lm1}
holds in this case. If exactly one of $\alpha_{1}, ..., \alpha_{s}$ is zero,
saying $\alpha_t=0$ for some integer $t\in \{1, ..., s\}$, then
$\beta_t=(-1)^{s-1}a_1\in \mathbb{Z}$ and $\beta_i=0$ for each integer
$i$ with $i\ne t$ and $1\le i\le s$. Hence Lemma \ref{lm1} is true in this case.

Assume now that none of $\alpha_{1}, ..., \alpha_{s}$ is zero.
Fix an integer $i$ with $1\le i\le s$. Since $a_{s}\ne 0$, one has
$\beta_i=a_s\frac{(-1)^{s}a_0/a_s}{\alpha_i}=(-1)^{s}\frac{a_0}{\alpha_i}.$
Therefore, to show that $\beta_i$ is an algebraic integer, it
suffices to prove that $\frac{a_0}{\alpha_i}$ is an algebraic
integer. From $f(\alpha_i)=0$, one derives that
$$
\frac{a_0^{s-1}}{\alpha_i^s}f(\alpha_i)=\Big(\frac{a_{0}}{\alpha_i}\Big)^{s}+
a_1\Big(\frac{a_{0}}{\alpha_i}\Big)^{s-1}+...+a_{s-1}a_{0}^{s-2}\Big(
\frac{a_{0}}{\alpha_i}\Big)+a_{s}a_{0}^{s-1}=0.
$$
This means that $\frac{a_0}{\alpha_i}$ is a root of the integer polynomial
$g(x)=x^{s}+a_{1}x^{s-1}+...+a_{s-1}a_{0}^{s-2}x +a_{s}a_{0}^{s-1}$,
from which it follows that $\frac{a_0}{\alpha_i}$ is an algebraic
integer. Lemma {\ref{lm1}} is proved in this case.
The proof of Lemma {\ref{lm1}} is complete.
\end{proof}

\begin{lemma}\label{lm2}
For any positive integer $n\ge 7$, we have
$\big\lceil n/2\big\rceil {n\choose \lceil n/2\rceil}>2^n.$
\end{lemma}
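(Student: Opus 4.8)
The plan is to prove the inequality by induction on $n$ with step $2$, reflecting the parity-dependent behaviour of $\lceil n/2\rceil$. Write $A_n:=\lceil n/2\rceil\binom{n}{\lceil n/2\rceil}$, and note that $\binom{n}{\lceil n/2\rceil}$ is exactly the central binomial coefficient (the largest among $\binom{n}{0},\dots,\binom{n}{n}$), since $\lceil n/2\rceil+\lfloor n/2\rfloor=n$. The two base cases are checked by hand: for $n=7$ one has $A_7=4\binom{7}{4}=140>128=2^7$, and for $n=8$ one has $A_8=4\binom{8}{4}=280>256=2^8$. Because $7$ is odd and $8$ is even, once the inductive step below is established it propagates to every $n\ge 7$.

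The heart of the argument is the ratio estimate $A_{n+2}>4A_n$, which I would obtain by direct computation, treating the even and odd cases separately. If $n=2m$, then
\[
\frac{A_{n+2}}{A_n}=\frac{(m+1)\binom{2m+2}{m+1}}{m\binom{2m}{m}}
=\frac{m+1}{m}\cdot\frac{2(2m+1)}{m+1}=\frac{2(2m+1)}{m}=4+\frac{2}{m}>4,
\]
where I used $\binom{2m+2}{m+1}=\binom{2m}{m}\cdot\frac{2(2m+1)}{m+1}$. If instead $n=2m+1$, then $\lceil(n+2)/2\rceil=m+2$ and
\[
\frac{A_{n+2}}{A_n}=\frac{(m+2)\binom{2m+3}{m+2}}{(m+1)\binom{2m+1}{m+1}}
=\frac{m+2}{m+1}\cdot\frac{(2m+3)(2m+2)}{(m+2)(m+1)}=\frac{2(2m+3)}{m+1}=4+\frac{2}{m+1}>4.
\]
In either case $A_{n+2}>4A_n$, so the inductive hypothesis $A_n>2^n$ gives $A_{n+2}>4\cdot 2^n=2^{n+2}$, completing the step.

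The only delicate point is bookkeeping around the parity split: the index $\lceil n/2\rceil$ jumps by $1$ each time $n$ increases by $1$, so passing from $n$ to $n+2$ must be carried out with the correct value of the ceiling in each residue class, and one must simplify the resulting products of factorials carefully to reach the clean form $4+\frac{2}{m}$ (respectively $4+\frac{2}{m+1}$). None of this presents a genuine obstacle; the ratio always strictly exceeds $4$, and with the two base cases in hand the induction yields $A_n>2^n$ for all $n\ge 7$. An alternative, non-inductive route would combine the explicit lower bound $\binom{2m}{m}\ge 4^m/(2\sqrt{m})$ with the factor $\lceil n/2\rceil$, but this forces one to verify a few more small cases by hand, so the step-$2$ induction is the more economical choice.
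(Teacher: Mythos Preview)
Your proof is correct and follows essentially the same strategy as the paper: an elementary induction with a parity split, verified at $n=7,8$. The only cosmetic difference is that the paper inducts in steps of~$1$ rather than~$2$, using the identities $A_{n+1}=2A_n$ for $n$ odd and $A_{n+1}=(2\lceil n/2\rceil+1)\binom{n}{\lceil n/2\rceil}$ for $n$ even, which avoids one of your two ratio computations; your step-$2$ version is equally valid and the ratio calculations $A_{n+2}/A_n=4+2/m$ (resp.\ $4+2/(m+1)$) are accurate.
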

\begin{proof}
We prove Lemma \ref{lm2} by induction on $n$. Evidently,
$\lceil n/2\rceil {n\choose \lceil n/2\rceil}>2^n$ holds for $n=7$ and 8.
Now let $n\geq 7$ and we assume that $\big\lceil n/2\big\rceil {n\choose \lceil n/2\rceil}>2^n$
is true for the $n$ case. Now we consider the $n+1$ case. One can easily check that
\begin{align*}
\big\lceil (n+1)/2\big\rceil {n+1\choose \lceil (n+1)/2\rceil}
= {\left\{
\begin{array}{rl}
2\lceil n/2\rceil{n\choose \lceil n/2\rceil},\quad &\text{if}\ n\  \text{is odd,} \\
(2\lceil n/2\rceil+1){n\choose \lceil n/2\rceil},\quad &\text{if}\ n\  \text{is even.}
\end{array}
\right.}
\end{align*}
It then follows that $\big\lceil (n+1)/2\big\rceil {n+1\choose \lceil (n+1)/2\rceil}>2^{n+1}.$
Hence Lemma \ref{lm2} holds for the $n+1$ case. Lemma \ref{lm2} is proved.
\end{proof}

\begin{lemma}\label{lm3}
Let $x$ be an indeterminate and let $m$ and $n$ be positive integers such
that $m\le n$. Then we have
\begin{equation}\label{eq:2.1}
\sum\limits_{k=m}^{n}(-1)^{n-k}{n-m\choose k-m}\prod_{j=m\atop j\ne k}^n(x-j)=(n-m)!.
\end{equation}
\end{lemma}

\begin{proof}
We show (\ref{eq:2.1}) by induction on $n$.
Obviously, (\ref{eq:2.1}) is true if $n=m$. Suppose that (\ref{eq:2.1}) holds
for the $n-1$ case. Now we let $n>m$. We prove that (\ref{eq:2.1})
also holds for the $n$ case. Since $(1-1)^{n-m}=0$, we have
$\sum_{k=m}^{n-1}(-1)^{k-m}{n-m\choose k-m}=(-1)^{n-1-m}$.
So by induction hypothesis, we get that
\begin{align*}
(n-m)!&=(n-m)\sum\limits_{k=m}^{n-1}(-1)^{n-1-k}{n-1-m\choose k-m}\prod_{j=m\atop j\ne k}^{n-1}(x-j)\\
\nonumber&=(n-m)\sum\limits_{k=m}^{n-1}\frac{(-1)^{n-k}}{n-k}{n-1-m\choose k-m}
((x-n)-(x-k))\prod_{j=m\atop j\ne k}^{n-1}(x-j)\\
\nonumber&=\sum\limits_{k=m}^{n-1}(-1)^{n-k}{n-m\choose k-m}
\prod_{j=m\atop j\ne k}^{n}(x-j)-\Big(\prod_{j=m}^{n-1}(x-j)\Big)
\sum\limits_{k=m}^{n-1}(-1)^{n-k}{n-m\choose k-m}\\
\nonumber&=\sum\limits_{k=m}^{n-1}(-1)^{n-k}{n-m\choose k-m}
\prod_{j=m\atop j\ne k}^{n}(x-j)+{n-m\choose n-m}\prod_{j=m}^{n-1}(x-j)\\
\nonumber&=\sum\limits_{k=m}^{n}(-1)^{n-k}{n-m\choose k-m}
\prod_{j=m\atop j\ne k}^{n}(x-j).
\end{align*}
Therefore (\ref{eq:2.1}) is true for the $n$ case.
This finishes the proof of Lemma \ref{lm3}.
\end{proof}

\noindent{\bf Remark.} Lemma \ref{lm3} can be proved using other methods.
For example, one can prove it by using the fundamental theorem of algebra.

\section{Proof of Theorem 1.1}

In this section, we show Theorem 1.1. We begin with the following lemma.

\begin{lemma}\label{key}
Let $s\ge 1$ be an integer and $f(x)\in \mathbb{Z}[x]$ be a polynomial of
degree $s$ and with $a_s$ as its leading coefficient.
Then for any two positive integers $m$ and $n$ with $1\le m\le n$, we have
$${\rm lcm}(f(m), f(m+1), \ldots, f(n))\geq \frac{1}{(n-m)!}
\prod\limits_{k=m}^{n}\bigg|\frac{f(k)}{a_{s}}\bigg|^{\frac{1}{s}}.$$
\end{lemma}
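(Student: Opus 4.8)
The plan is to convert the divisibility relations $f(k)\mid L$, where $L:={\rm lcm}(f(m),\dots,f(n))$, into a single \emph{nonzero rational integer} $N$ whose absolute value equals $\big((n-m)!\,L\big)^s\big/\prod_{k=m}^n|f(k)/a_s|$; once such an $N$ is produced, the inequality $|N|\ge 1$ rearranges at once into the claimed bound. I may assume $f(m),\dots,f(n)$ are all nonzero, since otherwise the right-hand side vanishes and there is nothing to prove. Write $\alpha_1,\dots,\alpha_s$ for the complex roots of $f$, so that $f(x)=a_s\prod_{\ell=1}^s(x-\alpha_\ell)$.

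First I would fix a root $\alpha_\ell$ and set $Q_\ell:=\prod_{j=m}^n(\alpha_\ell-j)$. Evaluating the identity of Lemma~\ref{lm3} at $x=\alpha_\ell$ and dividing by $Q_\ell$ gives
\[
\frac{(n-m)!}{Q_\ell}=\sum_{k=m}^n(-1)^{n-k}\binom{n-m}{k-m}\frac{1}{\alpha_\ell-k}.
\]
Multiplying through by $L$, the heart of the matter is to verify that every summand $L/(\alpha_\ell-k)$ is an algebraic integer. From the factorization of $f(k)$ one has $1/(k-\alpha_\ell)=a_s\prod_{\ell'\ne\ell}(k-\alpha_{\ell'})\big/f(k)$, hence $L/(\alpha_\ell-k)=-(L/f(k))\cdot a_s\prod_{\ell'\ne\ell}(k-\alpha_{\ell'})$. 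Here $L/f(k)\in\mathbb{Z}$ by the definition of $L$, while $a_s\prod_{\ell'\ne\ell}(k-\alpha_{\ell'})$ is an algebraic integer: applying Lemma~\ref{lm1} to the shifted polynomial $g(x):=f(x+k)\in\mathbb{Z}[x]$, which has degree $s$, leading coefficient $a_s$, and roots $\alpha_{\ell'}-k$, shows that $a_s\prod_{\ell'\ne\ell}(\alpha_{\ell'}-k)$ is an algebraic integer, and this differs from the wanted quantity only by the sign $(-1)^{s-1}$. Consequently $(n-m)!\,L/Q_\ell$ is an algebraic integer for each $\ell$.

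Next I would take the product over all $s$ roots and set
\[
N:=\prod_{\ell=1}^s\frac{(n-m)!\,L}{Q_\ell}=\frac{\big((n-m)!\,L\big)^s}{\prod_{\ell=1}^sQ_\ell}.
\]
As a product of algebraic integers, $N$ is an algebraic integer; on the other hand $\prod_{\ell=1}^sQ_\ell=\prod_{j=m}^n\prod_{\ell=1}^s(\alpha_\ell-j)=\prod_{j=m}^n(-1)^s f(j)/a_s$ is visibly rational, so $N$ is a rational algebraic integer, i.e.\ $N\in\mathbb{Z}$. Since $L\ge 1$ and each $Q_\ell$ is finite and nonzero (our standing assumption $f(j)\ne0$ forces $\alpha_\ell\ne j$), we get $N\ne 0$, whence $|N|\ge 1$. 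Substituting $\big|\prod_\ell Q_\ell\big|=\prod_{j=m}^n|f(j)/a_s|$ into $|N|\ge 1$ and extracting $s$-th roots yields $(n-m)!\,L\ge\prod_{k=m}^n|f(k)/a_s|^{1/s}$, which is exactly the assertion.

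I expect the principal obstacle to be the algebraic-integrality of the individual terms $L/(\alpha_\ell-k)$: the roots $\alpha_\ell$ need not be algebraic integers when $a_s\ne\pm1$, so the argument must route through Lemma~\ref{lm1} and the integer shift $f(x+k)$ in order to absorb the leading coefficient $a_s$ correctly, with care taken over the signs and over the bookkeeping of which factor $(k-\alpha_{\ell'})$ is omitted. The subsequent passage to a rational integer via the product over all roots, together with the final arithmetic, should then be routine.
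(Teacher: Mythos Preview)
Your proposal is correct and follows essentially the same approach as the paper: evaluate the partial-fraction identity of Lemma~\ref{lm3} at each root $\alpha_\ell$, use Lemma~\ref{lm1} on an integer shift of $f$ to see that each term $L/(\alpha_\ell-k)$ is an algebraic integer, and then take the product over all roots to obtain a nonzero rational integer whose bound $|N|\ge 1$ yields the claim. The only cosmetic difference is that the paper applies Lemma~\ref{lm1} to $h_k(x):=(-1)^sf(k-x)$ (with roots $k-\alpha_i$) instead of your $g(x)=f(x+k)$ (with roots $\alpha_i-k$), which amounts to the same thing up to the sign you already accounted for.
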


\begin{proof}
If $f(k)=0$ for some integer $k$ with $m\le k\le n$, then Lemma \ref{key}
is clearly true. In what follows we assume that $f(k)\neq 0$ for all integers
$k$ with $m\le k\le n$.

Write $f(x)=\sum_{i=0}^s a_ix^i$. Suppose that $\alpha_{1},
..., \alpha_{s}$ are $s$ roots of $f(x)$.
Then $f(x)=a_{s}(x-\alpha_{1})...(x-\alpha_{s})$. It infers that
$h_k(x):=(-1)^sf(k-x)=a_s\prod_{i=1}^s(x-(k-\alpha_i))\in \mathbb{Z}[x]$
is also a polynomial with the leading coefficient $a_s$ and
$k-\alpha_1,...,k-\alpha_s$ are $s$ roots of $h_k(x)$ for
each integer $k$ with $m\le k\le n$. So by Lemma \ref{lm1}, we know that
$\frac{f(k)}{k-\alpha_{i}}=a_s\prod_{j\in \{1,...,s\}\setminus\{i\}}(k-\alpha_j)$
is an algebraic integer for each pair $(k, i)$ with $m\le k\le n$ and $1\le i\le s$.
It follows that ${\rm lcm}(f(m), f(m+1),\cdots, f(n))/(k-\alpha_i)$ is an
algebraic integer for each integer $i$ with $1\le i\le s$.
Since $f(k)\neq 0$ for all $m\le k\le n$, we have $k-\alpha_i\ne 0$ for all
pairs $(k, i)$ with $1\le i\le s$ and $m\le k\le n$. Then letting
$x=\alpha_i (1\le i\le s)$ in (\ref{eq:2.1}), one deduces that
\begin{equation}\label{eq:3.3}
\frac{(n-m)!}{\prod\limits_{k=m}^{n}(k-\alpha_i)}
=\sum\limits_{k=m}^{n}(-1)^{k-m}{n-m\choose k-m}
\frac{1}{k-\alpha_i}.
\end{equation}
Multiplying both sides of (\ref{eq:3.3}) by ${\rm lcm}(f(m),..., f(n))$, we obtain that
$$\mathcal{A}_i:=(n-m)!{\rm lcm}(f(m),..., f(n))\prod\limits_{k=m}^{n}\frac{1}{k-\alpha_i}$$
is a nonzero algebraic integer, and so is the product $\mathcal{A}:=\prod\limits_{i=1}^{s}\mathcal{A}_i$.
But one can easily derive that
\begin{equation}\label{eq:3.4}
\mathcal{A}=((n-m)!)^{s}({\rm lcm}(f(m),..., f(n)))^{s}\prod\limits_{k=m}^{n}\frac{a_s}{f(k)},
\end{equation}
which implies that $\mathcal{A}$ is a nonzero rational number. Thus $\mathcal{A}$ is a
nonzero rational integer and so $|\mathcal{A}|\geq 1$. This together with (\ref{eq:3.4})
concludes the desired result. The proof of Lemma \ref{key} is complete.
\end{proof}

\begin{lemma}\label{key1}
Let $f(x)$ be a polynomial of degree 2 and of nonnegative integer coefficients.
Then for any integer $m\ge 2$, we have
${\rm lcm}(f(m-1), f(m))\ge \frac{(m(m-1))^{2}}{2m-1}.$
\end{lemma}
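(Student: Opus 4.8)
The plan is to bypass Lemma \ref{key} here, since in degree $2$ it only yields ${\rm lcm}(f(m-1),f(m))\ge \sqrt{f(m-1)f(m)}/a_2\ge m(m-1)$, which is weaker than the target $\frac{(m(m-1))^2}{2m-1}$ as soon as $m$ grows. Instead I would argue directly through the greatest common divisor. Write $f(x)=ax^2+bx+c$ with $a,b,c$ nonnegative integers and $a\ge 1$ (as $\deg f=2$). Since $m\ge 2$, both $f(m-1)\ge a(m-1)^2\ge 1$ and $f(m)\ge am^2>0$ are positive, so the identity ${\rm lcm}(f(m-1),f(m))=\frac{f(m-1)f(m)}{\gcd(f(m-1),f(m))}$ is available.

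First I would bound the gcd from above. Because $\gcd(f(m-1),f(m))$ divides the difference $f(m)-f(m-1)=a(2m-1)+b$, and this difference is a positive integer, we obtain $\gcd(f(m-1),f(m))\le a(2m-1)+b$. Consequently
$${\rm lcm}(f(m-1),f(m))\ge \frac{f(m-1)f(m)}{a(2m-1)+b}.$$

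Next I would bound the numerator from below by discarding the nonnegative constant term: $f(m-1)f(m)\ge\big(a(m-1)^2+b(m-1)\big)\big(am^2+bm\big)=m(m-1)(a(m-1)+b)(am+b)$. Substituting this in, it suffices to establish the purely algebraic inequality
$$(a(m-1)+b)(am+b)(2m-1)\ge (a(2m-1)+b)\,m(m-1),$$
since then ${\rm lcm}(f(m-1),f(m))\ge \frac{m(m-1)(a(m-1)+b)(am+b)}{a(2m-1)+b}\ge \frac{(m(m-1))^2}{2m-1}$, which is the assertion.

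The last step is the main (though elementary) obstacle. Expanding $(a(m-1)+b)(am+b)=a^2m(m-1)+ab(2m-1)+b^2$, one checks that the difference of the two sides equals
$$a(a-1)\,m(m-1)(2m-1)+b\big(a(2m-1)^2+b(2m-1)-m(m-1)\big).$$
The first summand is nonnegative because $a\ge 1$. For the second, $a\ge 1$ forces $a(2m-1)^2-m(m-1)\ge (2m-1)^2-m(m-1)=3m^2-3m+1>0$, so the bracket is strictly positive and, as $b\ge 0$, the whole summand is nonnegative. The delicate point to isolate is the boundary case $a=1$, where the first summand vanishes and the inequality survives solely on the strictly positive bracket of the second summand; a quick look shows the algebraic inequality is sharp precisely at $a=1,\ b=0,\ m=2$, which is consistent with the target bound being approached but not forced to equality in the lemma itself.
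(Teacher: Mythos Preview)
Your proof is correct and follows the same approach as the paper: bound $\gcd(f(m-1),f(m))$ by the difference $a(2m-1)+b$, drop the nonnegative constant term from the numerator, and then verify the remaining algebraic inequality (the paper handles this last step by inflating the $b$ in the denominator to $\tfrac{2m-1}{m-1}\,b$ and cancelling the factor $a(m-1)+b$, rather than by direct expansion). One inessential slip in your closing remark: the algebraic inequality $(a(m-1)+b)(am+b)(2m-1)\ge (a(2m-1)+b)m(m-1)$ is in fact sharp at $a=1,\ b=0$ for \emph{every} $m\ge 2$, not only $m=2$.
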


\begin{proof}
Since for any integer $m\ge 2$, we have ${\rm gcd}(f(m-1), f(m))|(f(m)-f(m-1))$. This infers that
${\rm gcd}(f(m-1), f(m))\le f(m)-f(m-1)$. Write $f(x)=a_{2}x^{2}+a_{1}x+a_{0}\in \mathbb{Z}$$[x]$,
where $a_0, a_1\ge 0$ and $a_2\ge 1$. Then we have
\begin{align*}
{\rm lcm}(f(m-1), f(m))=&\frac{f(m)f(m-1)}{{\rm gcd}(f(m-1), f(m))}\ge \frac{f(m)f(m-1)}{f(m)-f(m-1)}\\
&=\frac{(a_{2}m^{2}+a_{1}m+a_{0})(a_{2}(m-1)^{2}+a_{1}(m-1)+a_{0})}{(2m-1)a_{2}+a_{1}}\\
&\ge \frac{m^{2}(m-1)(a_{2}(m-1)+a_{1})}{(2m-1)a_{2}+a_{1}}\\
&\ge \frac{m^{2}(m-1)(a_{2}(m-1)+a_{1})}{(2m-1)a_{2}+\frac{2m-1}{m-1}a_{1}}=\frac{(m(m-1))^{2}}{2m-1}
\end{align*}
as desired. This concludes the proof of Lemma \ref{key1}.
\end{proof}

\begin{lemma}\label{key2}
Let $a$ and $b$ be coprime positive integers. Then
${\rm lcm}(a, a+b, a+2b)=a(a+b)(a+2b)$ or $\frac{1}{2}a(a+b)(a+2b).$
\end{lemma}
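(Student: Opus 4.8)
The plan is to reduce the computation of the three-term lcm to a study of the three pairwise greatest common divisors, exploiting that the terms $a$, $a+b$, $a+2b$ form an arithmetic progression whose common difference $b$ is coprime to $a$. First I would record the elementary fact that any common divisor of two of the terms must divide their difference, which is either $b$ or $2b$, so that coprimality of $a$ and $b$ severely limits how the terms can share factors.

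Specifically, any common divisor of $a$ and $a+b$ divides $(a+b)-a=b$ and also divides $a$, hence divides ${\rm gcd}(a,b)=1$; so ${\rm gcd}(a,a+b)=1$. The same argument applied to $a+b$ and $a+2b$ gives ${\rm gcd}(a+b,a+2b)={\rm gcd}(a+b,b)={\rm gcd}(a,b)=1$. For the outer pair, a common divisor of $a$ and $a+2b$ divides $2b$ and $a$, so it divides ${\rm gcd}(a,2b)$; because ${\rm gcd}(a,b)=1$, this equals ${\rm gcd}(a,2)$, which is $1$ when $a$ is odd and $2$ when $a$ is even. Thus the only way the three terms can fail to be pairwise coprime is through a shared factor $2$ between the two outer terms $a$ and $a+2b$.

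This dichotomy dictates a short case analysis on the parity of $a$. If $a$ is odd, all three pairwise gcds equal $1$, the terms are pairwise coprime, and ${\rm lcm}(a,a+b,a+2b)=a(a+b)(a+2b)$. If $a$ is even, then $b$ is odd (since ${\rm gcd}(a,b)=1$), so $a+b$ is odd and hence coprime to both even terms $a$ and $a+2b$; writing ${\rm lcm}(a,a+b,a+2b)=(a+b)\cdot{\rm lcm}(a,a+2b)$ and using ${\rm lcm}(a,a+2b)=\frac{a(a+2b)}{{\rm gcd}(a,a+2b)}=\frac{1}{2}a(a+2b)$, I obtain ${\rm lcm}(a,a+b,a+2b)=\frac{1}{2}a(a+b)(a+2b)$, as claimed.

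There is no serious obstacle here; the only point requiring genuine care is the correct evaluation of ${\rm gcd}(a,a+2b)$, where one must invoke ${\rm gcd}(a,b)=1$ to collapse ${\rm gcd}(a,2b)$ down to ${\rm gcd}(a,2)$ rather than naively treating the outer terms as coprime. Once the three pairwise gcds are pinned down, assembling the lcm of three integers of which at most two share only a single factor of $2$ is entirely routine.
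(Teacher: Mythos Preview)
Your proof is correct and follows essentially the same approach as the paper: both compute the three pairwise gcds by subtracting consecutive terms and invoking $\gcd(a,b)=1$, arriving at $\gcd(a,a+b)=\gcd(a+b,a+2b)=1$ and $\gcd(a,a+2b)\in\{1,2\}$. The only cosmetic difference is the final assembly---the paper quotes the three-term identity $\mathrm{lcm}(x,y,z)=\dfrac{xyz\,\gcd(x,y,z)}{\gcd(x,y)\gcd(y,z)\gcd(x,z)}$, whereas you split on the parity of $a$ and factor out the coprime middle term---but this is the same argument in slightly different dress.
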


\begin{proof}
Since $a$ and $b$ are coprime, we have ${\rm gcd}(a, a+b)|\gcd(a, (a+b)-a)=\gcd(a, b)$
and hence $\gcd(a, a+b)=1$. Similarly, one has $\gcd(a+b, a+2b)=1$ and
${\rm gcd}(a, a+2b)|\gcd(a, 2b)$. So ${\rm gcd}(a, a+2b)=1$ or 2. Then the desired
result follows immediately from the following well-known identity:
$${\rm lcm}(a, a+b, a+2b)
=\frac{a(a+b)(a+2b)\gcd(a, a+b, a+2b)}{\gcd(a, a+b)\gcd(a+b, a+2b)\gcd(a, a+2b)}.$$
So Lemma \ref{key2} is proved.
\end{proof}

We are now in a position to show Theorem \ref{main}.

\noindent{\it Proof of Theorem \ref{main}.} Since $f(x)$ is a polynomial with
nonnegative integer coefficients, we may let
$f(x)=a_{s}x^{s}+a_{s-1}x^{s-1}+...+a_{1}x+a_{0}\in \mathbb{Z}$$[x]$,
where $a_i\ge 0$ and $a_s\ge 1$. Then for any integer $n\ge 7$, by Lemmas
\ref{key} and \ref{lm2} and noting that $f(k)\ge a_{s}k^{s}$, we have
\begin{align*}
{\rm lcm}_{\lceil n/2\rceil \le i\le n} \{f(i)\}
& \ge \frac{\prod\limits_{k=\lceil
n/2\rceil}^{n}\big|\frac{f(k)}{a_{s}}\big|^{\frac{1}{s}}}{(n-\lceil n/2\rceil)!}
\ge \frac{\prod\limits_{k=\lceil n/2\rceil}^{n}k}{(n-\lceil
n/2\rceil)!}=\lceil n/2\rceil {n\choose \lceil n/2\rceil}> 2^n.
\end{align*}
So it remains to check that Theorem \ref{main} is true for all
positive integers $n\le 6$ in the following. First we consider the case $n=1$.
If $f(x)$ has at least two terms or $a_s\geq 2$, then
${\rm lcm}(f(\lceil n/2\rceil),..., f(n))=f(1)=\sum_{i=0}^s a_i\ge 2.$
Now let $2\le n\le 6$. We divide the proof into the following three cases.

{\sc Case 1.} $s\ge 3$. Then
${\rm lcm}(f(\lceil\frac{n}{2}\rceil),..., f(n))\ge a_{s}n^{s}\ge n^{s}\ge n^3>2^n$
for each integer $n$ with $2\le n\le 6$.
So Theorem \ref{main} is true in this case.

{\sc Case 2.} $s=2$. Then
${\rm lcm}(f(\lceil\frac{n}{2}\rceil),..., f(n))\ge f(n)\ge a_2n^2\ge n^2\ge 2^n$
for each integer $n$ with $2\le n\le4$. On the other hand, by Lemma \ref{key1}, we have
${\rm lcm}(f(\lceil\frac{n}{2}\rceil),..., f(n))
\ge {\rm lcm}(f(n-1), f(n))\ge \frac{(n(n-1))^{2}}{2n-1}\ge 2^n$
for $n=5, 6$. Theorem 1.1 is proved in this case.

{\sc Case 3.} $s=1$. First let $a_{0}=0,\ a_1=1 $ and $n=5$. Then $f(x)=x$.
Hence ${\rm lcm}_{\lceil \frac{n}{2}\rceil\le i\le n}\{f(i)\}
={\rm lcm}_{\lceil \frac{5}{2}\rceil\le i\le 5}\{i\}
={\rm lcm}(3, 4, 5)=60>2^5$ as required.
Now let $a_0=0 $ and $a_1 \ge 2$. Then $f(x)=a_1 x.$
Denote $\mathcal{L}_n:={\rm lcm}_{\lceil \frac{n}{2}\rceil\le i \le n}\{i\}$.
It is well known that $\mathcal{L}_n\ge 2^{n-1}$.
Since $a_1 \ge 2$ and $a_0=0$, one has
${\rm lcm}_{\lceil \frac{n}{2}\rceil\le i \le n}\{f(i)\}
=a_1 \mathcal{L}_n\ge 2\mathcal{L}_n\ge 2^n$ as claimed.

Finally, let $a_0 \ge 1, a_1\ge 1$ and $\gcd(a_0, a_1)=d$.
One may write $a_0=da$ and $a_1=db$ for some coprime
positive integers $a$ and $b$. If $n=2$ and 3, then we have
\begin{align*}
&{\rm lcm}_{\lceil \frac{n}{2}\rceil\le i \le n}\{f(i)\}
={\rm lcm}(f(n-1), f(n))=\frac{f(n-1)f(n)}{\gcd(f(n-1), f(n))}\\
&=\frac{f(n-1)f(n)}{\gcd(f(n)-f(n-1), f(n))}
=\frac{(a_1(n-1)+a_0)(a_1 n+a_0)}{\gcd(a_1, a_1 n+a_0)}\\
&=d(a(n-1)+b)(an+b)\ge n(n+1)>2^n
\end{align*}
as required.
If $4\le n\le 6$, then by Lemma \ref{key2}, one has
\begin{align*}
&{\rm lcm}_{\lceil \frac{n}{2}\rceil\le i \le n}\{f(i)\}
\ge {\rm lcm}(f(n-2), f(n-1), f(n))\\
&=d\cdot{\rm lcm}(a(n-2)+b, a(n-1)+b, an+b)\\
&\ge \frac{1}{2}d(a(n-2)+b)(a(n-1)+b)(an+b)\ge \frac{1}{2}(n-1)n(n+1)>2^n
\end{align*}
as desired.

This completes the proof of Theorem \ref{main}. \hfill$\Box$

\begin{center}
{\sc Acknowledgement}
\end{center}
The authors would like to thank the anonymous referee for
very helpful comments and suggestions that improved its presentation.

\bibliographystyle{amsplain}

\end{document}